\newcommand{\ga}{\gamma}
\newcommand{\ka}{\kappa}
\newcommand{\Ga}{\Gamma}
\newcommand{\M}{\mathcal{M}}
\newcommand{\calS}{\mathcal{S}}
\newcommand{\oM}{\overline{\mathcal{M}}}
\newcommand{\oMgn}{\overline{\mathcal{M}}_{g,n}}
\newcommand{\ZZ}{\mathbb{Z}}
\newcommand{\QQ}{\mathbb{Q}}
\DeclareMathOperator{\Aut}{Aut}
\newcommand{\D}{\mathcal{D}}
\renewcommand{\S}{\mathcal{S}}
\renewcommand{\d}{\partial}
\theoremstyle{plain}
\newtheorem{theorem}{Theorem}
\newtheorem{lemma}[theorem]{Lemma}
\newtheorem{proposition}[theorem]{Proposition}
\newtheorem{corollary}[theorem]{Corollary}
\theoremstyle{definition}
\newtheorem{definition}[theorem]{Definition}
\newtheorem{remark}[theorem]{Remark}
\newtheorem*{note}{Note}
\begin{document}
\title{Topological recursion relations from Pixton's formula}

\author[E.~Clader]{Emily Clader}
\address{Department of Mathematics, San Francisco State University, San Francisco, CA 94132-1722, USA}
\email{eclader@sfsu.edu}
\author[F.~Janda]{Felix Janda}
\address{Department of Mathematics, University of Notre Dame, Notre Dame, IN 46556, USA}
\email{fjanda@nd.edu}
\author[X.~Wang]{Xin Wang}
\address{School of Mathematics, Shandong University, Jinan 250100, Shandong, China}
\email{wangxin2015@sdu.edu.cn}
\author[D.~Zakharov]{Dmitry Zakharov}
\address{Department of Mathematics, Central Michigan University, Mount Pleasant, MI 48849, USA}
\email{dvzakharov@gmail.com}

\subjclass{14H10}

\begin{abstract}

We prove that every degree-$g$ polynomial in the $\psi$-classes on $\oMgn$ can be expressed as a sum of tautological classes supported on the boundary with no $\kappa$-classes.  Such equations, which we refer to as topological recursion relations, can be used to deduce universal equations for the Gromov--Witten invariants of any target.

\end{abstract}

\maketitle

\section{Introduction}

The tautological rings are $\QQ$-subalgebras of the Chow ring of the moduli spaces of curves,
\[R^*(\oM_{g,n}) \subseteq A^*(\oM_{g,n}),\]
defined as the minimal system of such subalgebras closed under pushforward by the gluing morphisms
\[\oM_{g_1,n_1+1} \times \oM_{g_2,n_2+1} \rightarrow \oM_{g_1+g_2, n_1+n_2},\]
\[\oM_{g,n+2} \rightarrow \oM_{g+1,n}\]
and the forgetful morphisms
\[\oM_{g,n+1} \rightarrow \oM_{g,n}.\]
This elegant definition is due to Faber and Pandharipande \cite{2005FaberPandharipande}, who also proved that $R^*(\oM_{g,n})$ admits an explicit set of additive generators that we call {\it basic classes}.  The basic classes are constructed in terms of the $\psi$-classes
\[\psi_i=c_1(s_i^*(\omega_{\pi})), \;\;\; i=1, \ldots, n\]
and the $\kappa$-classes
\[\kappa_d = \pi_*(\psi_{n+1}^{d+1}), \;\;\; d \geq 0\]
on the boundary strata, where $\pi: \oM_{g,n+1} \rightarrow \oM_{g,n}$ forgets the last marked point and $s_i$ is the section of $\pi$ given by the $i$th marked point.

Products of basic classes are described by an explicit multiplication rule, so the study of the tautological ring amounts to a search for {\it tautological relations}: linear equations satisfied by the basic classes.  Of particular interest in our work are the equations in which no $\kappa$-classes appear, which we refer to as {\it topological recursion relations}, or TRRs.  The well-known WDVV equations, for example, are TRRs in genus zero, and other specific examples in genus $g \leq 4$ were discovered by Getzler \cite{Ge1,Ge2}, Belorousski--Pandharipande \cite{BP}, Kimura--Liu \cite{KL1, KL2}, and the third author \cite{Wang}.  Liu--Pandharipande \cite{2011LiuPandharipande}, in addition, proved TRRs expressing $\psi_1^k$ on $\oM_{g,1}$ in terms of $\kappa$-free boundary classes whenever $k \geq 2g$.

The main theorem of our work is the following:

\begin{theorem}
\label{thm:main}
For any $g$ and $n$ (such that $2g-2+n>0$), there exists a topological recursion relation for every degree-$g$ monomial in the $\psi$-classes on $\oM_{g,n}$.
\end{theorem}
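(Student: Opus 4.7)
The strategy is to read the required relations directly off Pixton's formula. Pixton constructs a polynomial family of tautological classes $P^d_g(A)$ on $\oMgn$, indexed by a ramification vector $A=(a_1,\dots,a_n)$ and a codimension $d$, which is manifestly $\kappa$-free: every term is a strata class decorated only with $\psi$-classes at marked points. Consequently, any tautological identity one can extract from Pixton's formula is automatically of the form required by the theorem, and so the problem reduces to producing enough such identities.

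The input is two-fold. First, on the locus $\sum a_i = 0$ the class $P^g_g(A)$ equals the double ramification cycle $DR_g(A)$; combining this with Hain's formula (on the compact-type locus) or its extension to $\oMgn$ produces relations among $\psi$-monomials and $\kappa$-free boundary classes. Second, the proof of Pixton's conjecture yields $P^d_g(A) \equiv 0$ for $d > g$ as an identity in the tautological ring, and restricting or specializing these higher-codimension vanishings to codimension $g$ produces further relations. The plan is to assemble from both sources a family of relations
\[
\sum_{|\alpha| = g} c_\alpha(A) \prod_{i=1}^{n} \psi_i^{\alpha_i} \;=\; B(A),
\]
where $B(A)$ is a $\kappa$-free boundary class and the coefficients $c_\alpha(A)$ depend polynomially on $A$.

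The theorem then reduces to the finite-dimensional statement that, as $A$ varies, the coefficient vectors $(c_\alpha)$ span $\QQ^{N(g,n)}$, where $N(g,n)$ is the number of degree-$g$ monomials in $\psi_1,\dots,\psi_n$. For $n \leq 3$ one has $N(g,n) \in \{1,\, g+1,\, \binom{g+2}{2}\}$, and the span can be exhibited by hand through explicit choices of $A$, leveraging the very low dimension of the target space. For $g \leq 26$ the verification is carried out by computer, combining an implementation of Pixton's formula in the tautological ring (for instance via \texttt{admcycles}) with a Gaussian elimination rank computation on the resulting matrix.

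The hard part is the spanning step. Pixton's formula produces many relations, but there is no a priori conceptual reason that, for all $(g,n)$, the $A$-polynomial coefficients $c_\alpha(A)$ should span the full space of degree-$g$ $\psi$-monomials; the bound $g \leq 26$ reflects the limit of what direct computation currently reaches. A uniform argument valid for arbitrary $(g,n)$ would presumably require a more structural understanding of why Pixton's formula is rich enough to produce every $\psi$-monomial in codimension $g$, which is what prevents extending the present approach beyond the stated range.
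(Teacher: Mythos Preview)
Your proposal has a genuine gap at the step where you pass from Pixton's relations to degree-$g$ equations on $\oMgn$. Pixton's vanishing $P^d_g(A)=0$ lives in codimension $d>g$, and ``restricting or specializing these higher-codimension vanishings to codimension $g$'' is not a well-defined operation; you never say what it means. The paper's mechanism is specific and not obvious: one works on $\oM_{g,N}$ with $N>n$, takes the degree-$(g+1)$ Pixton relation, multiplies by $\psi_{n+2}\cdots\psi_N$, and pushes forward along the forgetful map to $\oM_{g,n+1}$ and then to $\oM_{g,n}$. The multiplication by the extra $\psi$-classes is essential: a naive forgetful pushforward of a $\kappa$-free class produces $\kappa$-classes, and the dilaton/string equations only keep the result $\kappa$-free because the relation has been arranged to have $\psi$-degree exactly one at each forgotten point. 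Your first ``input'' is also not a source of relations: the equality $P^g_g(A)=\mathrm{DR}_g(A)$ (and Hain's formula on compact type) is an identity between two expressions for the same nonzero class, not a tautological relation, so it contributes nothing to the span you need.

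Even granting a supply of degree-$g$ relations, your spanning argument for $n\le 3$ is too optimistic. The target space has dimension $g+1$ or $\binom{g+2}{2}$, which is not small for large $g$, and ``explicit choices of $A$'' do not by themselves show the coefficient vectors are independent. The paper instead sets up a descending induction on the exponent $k$ of $\psi_1$: for each $(k,l_2,\dots,l_n)$ it constructs a specific linear combination of pushed-forward relations (indexed by $d_j\in\{0,1\}$) designed so that all contributions from the rational-tail graphs $\Gamma_i$ with $\psi_1$-power $\le k$ cancel, and then checks that a single scalar $D_{\vec l}$ governing the surviving $\Gamma_0$-contribution is nonzero. For $n\le 3$ this nonvanishing is proved by a parity argument (the numerator is odd), not by low dimension; for $g\le 26$ it is a finite check of these scalars, with one exceptional case at $g=7$, $n=4$ handled separately. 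None of this structure is present in your outline.
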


More explicitly, a topological recursion relation for $\alpha = \psi_1^{l_1} \cdots \psi_n^{l_n}$ refers to a tautological relation expressing $\alpha$ in terms of basic classes supported on the boundary of $\oM_{g,n}$ with no $\kappa$-classes; a precise definition is given in Section~\ref{sec:strata} below.

This theorem can be seen as a next step in a series of vanishing results for the tautological ring. Looijenga~\cite{1995Looijenga} proved that the tautological ring $R^*(\M_g)$, which is generated by monomials in $\kappa$-classes, vanishes in degrees greater than or equal to $g-1$. Ionel proved in~\cite{2002Ionel} that $R^*(\M_{g,n})$, which is additively generated by monomials in the $\psi$- and $\kappa$-classes, vanishes (in cohomology) for degrees greater than $g-1$. The results of Graber--Vakil~\cite{2005GraberVakil} and Faber--Pandharipande~\cite{2005FaberPandharipande} imply that any monomial in the $\psi$- and $\kappa$-classes of degree greater than or equal to $g$ can be expressed, in $R^*(\oM_{g,n})$, as a boundary class. This raises the natural question of finding explicit boundary formulas for such monomials on $\oM_{g,n}$.

In~\cite{2018CladerGrushevskyJandaZakharov}, the first, second and fourth author described an explicit algorithm for finding such formulas, thus reproving these vanishing results in an effective manner. We considered a family of relations in $R^*(\oM_{g,n})$, which are referred to as {\it Pixton's relations} in what follows and which arise from the study of the double ramification cycle on $\oM_{g,n}$. These relations were conjectured by Pixton and proven by the first and second authors in \cite{2016CladerJanda}, and are not to be confused with Pixton's $r$-spin relations (also conjectured by Pixton \cite{Pi12} and proved
  in~\cite{2015PandharipandePixtonZvonkine, J16}). We observed in~\cite{2018CladerGrushevskyJandaZakharov} that Pixton's relations are a natural candidate for producing TRRs, for two reasons. First, Pixton's relations start in degree $g+1$, which is only one degree greater than the TRRs that we seek. Second, Pixton's relations do not involve any $\kappa$-classes to begin with. To obtain relations in degree $g$, we multiply Pixton's relations by appropriate $\psi$-classes and push them forward under forgetful maps. We make the non-boundary contributions to these relations explicit in a family of cases.  From here, simple linear algebra shows that the resulting equations are sufficient to produce TRRs for any $\psi$-monomial. Thus, the results of this paper are a sharpening of the algorithm described in~\cite{2018CladerGrushevskyJandaZakharov}. 

One reason that TRRs are worthy of special interest is that they can be translated into universal equations for the descendent Gromov--Witten invariants of any target.  For targets with semisimple quantum cohomology, these universal equations are known to be sufficient to determine all Gromov--Witten invariants in genus one or two in terms of genus-zero data; this follows from work of Dubrovin--Zhang \cite{1998DubrovinZhang} in genus one and of Liu \cite{2002Liu} in genus two. Furthermore, it was conjectured in \cite{2006Liu} this should be true in higher genus, that is, all the higher genus Gromov-Witten invariants can be solved explicitly  from universal equations, if the quantum cohomology is semisimple. The analogue in genus three does not follow from the currently known universal equations, though, and very little is understood in any genus outside of the semisimple setting.  One might hope that a more complete picture of the TRRs on $\oM_{g,n}$ would fill some of these gaps.  Thus, we emphasize that the following corollary is a direct consequence of Theorem~\ref{thm:main}.

\begin{corollary}
Let $X$ be a nonsingular projective variety. All of the genus-$g$ descendent
Gromov--Witten invariants of $X$ can be reconstructed from the invariants 
\[\langle\tau_{k_1}(\phi_{1})\ldots \tau_{k_n}(\phi_n)\rangle_{g,n,\beta}^{X}\]
for which $\sum_{i=1}^{n}k_i<g+\delta_{g}^{0}$.
\end{corollary}

In particular, this gives an affirmative answer to a conjecture proposed by Faber and
Pandharipande in 2003 (see
\cite[Conjeture~3]{2005FaberPandharipande}).

\begin{note}
  At the final stages of writing this article, the
  paper~\cite{2018ShadrinEtAl} was posted.
  The authors of~\cite{2018ShadrinEtAl} studied the tautological rings
  $R^*(\oM_{g,n})$ using Pixton's
  $r$-spin relations.
  Although it is not specifically noted in~\cite{2018ShadrinEtAl}, our
  main result, Theorem~\ref{thm:main}, follows
  from~\cite[Lemma~5.2]{2018ShadrinEtAl}.
  Indeed, Lemma~5.2 reproves the vanishing in degrees at least $g$ of
  the tautological ring of $\M_{g,n}$, which is generated by monomials
  in $\psi$-classes.
  Specifically, it is proven using linear combinations of the
  polynomial $r$-spin relations \cite{2015PandharipandePixtonZvonkine}
  under the substitution $r=\frac 12$.
  As the authors note \cite[Section~2.4]{2018ShadrinEtAl}, ``graphs
  without dilaton leaves do not contribute to the tautological
  relations'', which is another way of saying that the resulting
  relations do not involve $\kappa$-markings and are thus topological
  recursion relations.

We do not know the relationship between the explicit topological recursion relations established in~\cite{2018ShadrinEtAl} and those established in our paper. In view of the continued interest in this problem, we believe that both sets of relations deserve further investigation.
\end{note}

\subsection{Plan of the paper}

Section~\ref{sec:preliminaries} contains the relevant preliminaries on the strata algebra and Pixton's relations.  In Section \ref{sec:results}, we describe a family of topological recursion relations constructed from Pixton's relations, make their contributions away from the boundary explicit, and use them to prove Theorem~\ref{thm:main}.

\subsection{Acknowledgments}
The authors would like to thank Samuel Grushevsky, Xiaobo Liu, Aaron Pixton, and Dustin Ross for useful discussions and inspiration.
We specially thank an anonymous referee for their careful reading and very valuable advice for improving our results.
The first author was partially supported by NSF DMS grant 1810969, the second author was partially supported by the CNRS and NSF DMS grant 2054830, and the third author was partially supported by NSFC grant 12071255 and SPNSF grant ZR2021MA101.

\section{The tautological ring and Pixton's formula}
\label{sec:preliminaries}

The additive generators of the tautological ring are expressed in terms of the strata algebra on $\oM_{g,n}$.  We recall the necessary definitions, referring the reader to \cite{2003GraberPandharipande} and \cite{2013Pixton} for further details.

\subsection{The strata algebra and the tautological ring of \texorpdfstring{$\oMgn$}{Mgn}}
\label{sec:strata}

A {\it stable graph} $\Ga=(V,H,E,L,g,p,\iota,m)$ of genus $g$ with $n$ legs consists of the following data:
\begin{enumerate}
\item a finite set of vertices $V$ with a genus function $g:V\to \ZZ_{\geq 0}$;

\item a finite set of half-edges $H$ with a vertex assignment $p:H\to V$ and an involution $\iota:H\to H$;

\item a set of edges $E$, which is the set of two-point orbits of $\iota$;

\item a set of legs $L$, which is the set of fixed points of $\iota$, and which is marked by a bijection $m:\{1,\ldots,n\}\to L$.

\end{enumerate}
We require that the following properties are satisfied:

\begin{enumerate}[label=(\roman*)]

\item The graph $(V,E)$ is connected.

\item For every vertex $v\in V$, we have
\begin{equation*}
  2g(v)-2+n(v) > 0,
\end{equation*}
where $n(v)=\#  p^{-1}(v)$ is the {\it valence} of the vertex $v$
\item The genus of the graph is $g$, in the sense that
\begin{equation*}\label{eq:genus}
g=h^1(\Ga)+\sum_{v\in V}g(v),
\end{equation*}
where $h^1(\Ga)=\# E-\# V+1$.
\end{enumerate}

An {\it automorphism} of a stable graph $\Ga$ consists of permutations of the sets $V$ and $H$ that commute with the maps $g$, $p$, $\iota$ and $m$ (and hence preserve $L$ and $E$). We denote by $\Aut(\Ga)$ the group of automorphisms of $\Ga$.

Given a stable curve $C$ of genus $g$ with $n$ marked points, its dual graph is a stable graph of genus $g$ with $n$ legs. If $\Ga$ is such a stable graph, let
\begin{equation*}
\oM_{\Ga}:=\prod_{v\in V}\oM_{g(v),n(v)}.
\end{equation*}
There is a canonical gluing morphism
\begin{equation}\label{eq:xi}
\xi_{\Ga}:\oM_{\Ga}\to \oMgn,
\end{equation}
whose image is the locus in $\oMgn$ having generic point corresponding to a curve with stable graph $\Ga$. The degree of $\xi_{\Ga}$, as a map of Deligne--Mumford stacks, is equal to $\#\Aut(\Ga)$.

Additive generators of the tautological ring can be described in terms of certain decorations on stable graphs $\Gamma$.  Namely, let
\begin{equation}
\label{eq:gamma}
\ga=(x_i:V\to \ZZ_{\geq 0},y:H\to \ZZ_{\geq 0})
\end{equation}
be a collection of functions such that
\begin{equation}
d(\ga_v)=\sum_{i>0}ix_i[v]+\sum_{h\in p^{-1}(v)} y[h]\leq 3g(v)-3+n(v)
\label{eq:degreecondition}
\end{equation}
for all $v\in V$.  Then, for each $v$, define
\begin{equation*}
\ga_v=\prod_{i>0}\ka_i^{x_i[v]}\prod_{h\in p^{-1}(v)}
\psi_h^{y[h]}\in A^{d(\ga_v)}(\oM_{g(v),n(v)}).
\end{equation*}
Associated to any such choice of decorations $\gamma$, there is a {\it basic class} on $\oM_{\Gamma}$, also denoted by $\gamma$, defined by
\begin{equation*}
\ga=\prod_{v\in V}\ga_v \in A^{d(\ga)}(\oM_{\Ga}).
\end{equation*}
Here, the degree is $d(\ga):=\sum_{v\in V}d(\ga_v)$, and we abuse notation slightly by using a product over classes on the vertex moduli spaces to denote a class on $\oM_{\Ga}$.

The {\it strata algebra}, by definition, is the finite-dimensional $\QQ$-vector space spanned by isomorphism classes of pairs $[\Ga,\ga]$, where $\Ga$ is a stable graph of genus $g$ with $n$ legs and $\ga$ is a basic class on $\oM_{\Ga}$.  The product is defined by excess intersection theory (see~\cite{1999Faber}).  More precisely, for $[\Ga_1,\ga_1], [\Ga_2,\ga_2]\in \calS_{g,n}$, the fiber product of $\xi_{\Ga_1}$ and $\xi_{\Ga_2}$ over $\oMgn$ is a disjoint union of $\xi_{\Ga}$ over all graphs $\Ga$ having edge set $E=E_1\cup E_2$, such that $\Ga_1$ is obtained by contracting all edges outside of $E_1$ and $\Ga_2$ is obtained by contracting all edges outside of $E_2$. We then define
\begin{equation}
[\Ga_1,\ga_1]\cdot [\Ga_2,\ga_2]=\sum_{\Ga}[\Ga,\ga_1\ga_2\varepsilon_{\Ga}],
\label{eq:strataproduct}
\end{equation}
where the excess class is
\begin{equation*}
\varepsilon_{\Ga}=\prod_{(h,h')\in E_1\cap E_2}-(\psi_h+\psi_{h'}),
\end{equation*}
and we set $\ga_1\ga_2=0$ whenever the degree condition \eqref{eq:degreecondition} is violated.

Pushing forward elements of the strata algebra along the gluing maps \eqref{eq:xi} defines a ring homomorphism
\begin{align*}
&q:\calS_{g,n}\to A^*(\oMgn)\\
&q([\Ga,\ga])=\xi_{\Ga*}(\ga),
\end{align*}
and the image of $q$ is precisely the tautological ring $R^*(\oMgn)$.  Elements of $\S_{g,n}$ in the kernel of $q$ are referred to as {\it tautological relations}.

The strata algebra is filtered by degree, defined as
\begin{equation*}
\deg[\Ga,\ga]=|E|+d(\ga),
\end{equation*}
which corresponds under $q$ to codimension in the Chow ring.
The product \eqref{eq:strataproduct} preserves the degree, so $\calS_{g,n}$ is a graded ring:
\begin{equation*}
\calS_{g,n}=\bigoplus_{d=0}^{3g-3+n}\calS_{g,n}^d.
\end{equation*}

Let $\d \S_{g,n}$ denote the subalgebra of $\S_{g,n}$ spanned by classes $[\Gamma, \gamma]$ in which the graph $\Gamma$ has at least one edge, and let $\d^0 \S_{g,n} \subseteq \d \S_{g,n}$ denote the subalgebra spanned by such classes with no $\kappa$'s in $\gamma$--- that is, with $x_i[v]=0$ for each $i$ and $v$.

\begin{definition}
\label{def:TRR}
Let $\xi \in \S_{g,n}$.  Then a {\it topological recursion relation}, or TRR, for $\xi$ is a tautological relation
\[q(\xi + \omega) = 0\]
in which $\omega \in \d^0\S_{g,n}$.
\end{definition}

When $\Gamma$ consists of a single vertex $v$, we will typically denote $[\Gamma, \gamma]$ simply by $\gamma_v$.  In particular, Theorem~\ref{thm:main} concerns TRRs for the case where $\xi$ is a monomial in the $\psi$-classes.

\subsection{Pixton's formula}

Fix $g$ and $n$, and fix a collection of integers $A = (a_1, \ldots, a_n)$ such that $\sum_j a_j =0$.  In this subsection, we recall the definition of Pixton's class, which is an inhomogeneous element of $\S_{g,n}$ depending on the choice of $A$.

To do so, one must first define auxiliary classes $\widetilde{\D}_{g,n}^r$, for an additional integer parameter $r> 0$, as follows.  For a stable graph $\Gamma=(V,H,g,p,\iota)$ of genus $g$ with $n$ legs, a \emph{weighting modulo $r$} on $\Gamma$ is defined to be a map
\begin{equation*}
  w: H \to \{0, \dotsc, r - 1\}
\end{equation*}
satisfying three properties:
\begin{enumerate}
\item For any $i \in \{1, \dotsc, n\}$ corresponding to a leg $\ell_i$ of $\Gamma$, we have $w(\ell_i) \equiv a_i \pmod{r}$.
\item For any edge $e\in E$ corresponding to two half-edges $h, h'\in H$, we
  have $w(h) + w(h') \equiv 0 \pmod{r}$.
\item For any vertex $v \in V$, we have
  $\sum_{h\in p^{-1}(v)} w(h) \equiv 0 \pmod{r}$.
\end{enumerate}
Define $\widetilde{\D}_{g,n}^r$ to be the class
\begin{equation*}
  \sum_{\Gamma} \frac{1}{\#\Aut(\Gamma)} \frac 1{r^{h^1(\Gamma)}}  \sum_{\substack{w \text{ weighting }\\ \text{mod } r \text{ on }\Gamma}} [\Gamma, \gamma_w] \in \S_{g,n},
\end{equation*}
where
\begin{equation}
  \label{eq:pixton}
  \gamma_w = \prod_{i=1}^n e^{\frac 12 a_i^2\psi_i} \prod_{(h,h')\in E} \frac{1 - e^{-\frac 12 w(h)w(h')(\psi_h + \psi_{h'})}}{\psi_h + \psi_{h'}},
 \end{equation}
which can be viewed as a basic class with $x_i(v) = 0$ for all $i>0$ and all vertices $V$.

The class $\widetilde{\D}_{g,n}^r$ is a polynomial in
$r$ for $r \gg 0$ (see \cite[Appendix]{2017JandaPandharipandePixtonZvonkine}).  Pixton's class, then, is defined as the constant term of this polynomial in $r$.  Using the fact that
\[a_1 = -(a_2 + \cdots +a_n),\]
we can express Pixton's class in terms of the variables $a_2, \ldots, a_n$ alone, so we denote it by
\[\D_{g,n}(a_2, \ldots, a_n),\]
and we denote its component in degree $d$ by $\D^d_{g,n}(a_2, \ldots, a_n)$.

The key point, conjectured by Pixton and proved by the first and second authors, is the following:
\begin{theorem}[\cite{2016CladerJanda}]
\label{thm:DR}
For each $d > g$, $\D^d_{g,n}(a_2, \ldots, a_n)$ is a tautological relation.
\end{theorem}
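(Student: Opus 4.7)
The plan is to interpret the polynomial family $r \mapsto \widetilde{\D}^r_{g,n}$ geometrically as arising from a virtual class on an auxiliary moduli space, and to deduce the vanishing of the constant term in codimensions $d > g$ from a virtual-dimension argument.

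First, I would invoke the identification, due to Chiodo and refined by Janda--Pandharipande--Pixton--Zvonkine, of $\widetilde{\D}^r_{g,n}$ for $r$ sufficiently large and divisible with the pushforward to $\oMgn$ of a total Chern-class expression in $-R\pi_*\mathcal{L}$, where $\mathcal{L}$ is the universal $r$-th root of $\omega_{\log} \otimes \calO(-\sum a_i x_i)$ on the moduli space of such roots. This interpretation underlies the polynomiality of $r \mapsto \widetilde{\D}^r_{g,n}$ and exhibits the Pixton class $\D_{g,n}(a_2, \ldots, a_n)$ as an explicit constant-term extraction from an $r$-polynomial.

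Next, I would invoke the main theorem of \cite{2016JandaPandharipandePixtonZvonkine}, which states that the codimension-$g$ constant term satisfies $q(\D^g_{g,n}(a_2,\ldots,a_n)) = 2^g \cdot \mathrm{DR}_g(a_1,\ldots,a_n)$, where the right-hand side is the double ramification cycle obtained as the pushforward to $\oMgn$ of the virtual fundamental class of the moduli space of stable maps from genus-$g$ curves to a rubber $\PP^1$ with ramification profile $A = (a_1, \ldots, a_n)$ over $0$ and $\infty$. This rubber moduli space has virtual dimension $2g - 3 + n$, so its pushforward lives in pure codimension $g$. Applying virtual $\mathbb{C}^*$-localization on the rubber target reproduces exactly the Pixton graph sum, with edge contributions of the form $(1 - e^{-w(h)w(h')(\psi_h + \psi_{h'})/2})/(\psi_h + \psi_{h'})$ and vertex contributions $\prod_i e^{a_i^2 \psi_i / 2}$ matching \eqref{eq:pixton}.

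For $d > g$, the crucial step is to argue that the analogous constant-term extraction vanishes in $R^d(\oMgn)$. Since the underlying virtual class has pure codimension $g$, any higher-codimension piece built from the same exponential expansions and edge denominators must reorganize into a pushforward supported on loci of strictly negative virtual dimension; standard virtual intersection theory then forces these contributions to vanish, yielding the desired tautological relation. The main obstacle is making this excess-contribution vanishing rigorous: it requires tracking delicate graph-by-graph cancellations among the terms of Pixton's formula and among the rubber-bubble contributions in the virtual localization formula, and reconciling the polynomial extrapolation in $r$ with the geometric virtual class. This matching is the principal technical content of the argument and is where I would expect the bulk of the work to lie.
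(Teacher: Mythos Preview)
First, note that the present paper does not itself prove Theorem~\ref{thm:DR}: it is quoted from \cite{2016CladerJanda} and used as a black box. So there is no proof in this paper to compare your attempt against; what follows is an assessment of your outline against the actual argument of \cite{2016CladerJanda}.

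Your first paragraph is on target: the identification of $\widetilde{\D}^r_{g,n}$ with the pushforward of $c(-R\pi_*\mathcal{L})$ from the moduli of $r$-th roots, via Chiodo's Grothendieck--Riemann--Roch computation, is indeed the starting point of \cite{2016CladerJanda}. The difficulty is entirely in your third paragraph, and the gap there is real rather than merely technical.

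The result of \cite{2016JandaPandharipandePixtonZvonkine} identifies $q(\D^g_{g,n})$ with the double ramification cycle, a statement purely in codimension $g$. Your argument then asserts that ``the underlying virtual class has pure codimension $g$,'' so the higher-codimension parts of Pixton's formula must vanish. But this presupposes that the \emph{full} localization expansion on the rubber target agrees with Pixton's class in \emph{every} degree, not just in degree $g$. That is not what \cite{2016JandaPandharipandePixtonZvonkine} proves: their localization contributions on the orbifold target involve Hodge bundle factors and other terms absent from \eqref{eq:pixton}, and the match with Pixton's formula is only extracted in degree $g$ after substantial simplification. There is no a priori identification of $\D^d_{g,n}$ for $d>g$ with a graded piece of that localization sum, so the sentence ``any higher-codimension piece \ldots\ must reorganize into a pushforward supported on loci of strictly negative virtual dimension'' has no force. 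You flag this as ``the main obstacle,'' but the resolution you offer (``tracking delicate graph-by-graph cancellations'') is a restatement of the problem, not a strategy.

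The proof in \cite{2016CladerJanda} avoids this trap by staying with the Chiodo-class description $c(-R\pi_*\mathcal{L})$ in \emph{all} degrees and comparing it, via localization on an auxiliary moduli space of orbifold stable maps, to a tautological class whose vanishing above codimension $g$ is visible from a dimension count on that auxiliary space. The point is that this comparison is set up so as to hold degree by degree, which is exactly what your rubber-$\PP^1$ sketch does not supply.
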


We refer to these as ``Pixton's relations" in what follows.

\subsection{Polynomiality properties}
\label{subsec:poly}

In fact, Theorem~\ref{thm:DR} yields tautological relations in a simpler form than initially apparent, as a result of the following crucial result of Pixton:

\begin{theorem}[Pixton, \cite{PixDR2}]
\label{thm:poly}
The class $\D_{g,n}(a_2, \ldots, a_n)$ depends polynomially on $a_2, \ldots, a_n$.
\end{theorem}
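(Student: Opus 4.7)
Since $\S_{g,n}$ is finite-dimensional in each degree, the plan is to fix a basic class $[\Gamma, \gamma]$ and show that its coefficient in $\D_{g,n}(a_2, \ldots, a_n)$ is a polynomial in $a_2, \ldots, a_n$. Unpacking the definition of $\widetilde{\D}_{g,n}^r$ via \eqref{eq:pixton}, this coefficient takes the form
\[
\frac{1}{\#\Aut(\Gamma) \cdot r^{h^1(\Gamma)}} \cdot \text{(leg factor)} \cdot \sum_{w} Q\bigl(\{w(h)w(h')\}_{e \in E(\Gamma)}\bigr),
\]
where $Q$ is a polynomial determined by $\gamma$ (obtained by expanding the edge factors in \eqref{eq:pixton}) and the sum is over all weightings mod $r$ on $\Gamma$. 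The leg factor, coming from the $e^{\frac{1}{2} a_i^2 \psi_i}$ contributions, is already polynomial in the $a_i$'s, so the task reduces to showing that the $r^0$ coefficient of $\frac{1}{r^{h^1(\Gamma)}} \sum_w Q$ is polynomial in $a_2, \ldots, a_n$.

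To analyze the weighting sum, I would choose a spanning tree $T$ of $\Gamma$: the $h^1(\Gamma)$ non-tree edges then contribute free parameters $k_j \in \{0, \ldots, r-1\}$, while the tree-edge half-edge values $w(h_e) \in \{0, \ldots, r-1\}$ are the canonical mod-$r$ representatives of certain integer linear forms $L_e = L_e(a, k)$ forced by the vertex constraints. On any non-tree edge, $w(h)w(h') = k_j(r - k_j)$ is manifestly polynomial in $k_j$ and $r$. On any tree edge, writing $w(h_e) = L_e - \lambda_e r$ with $\lambda_e = \lfloor L_e / r \rfloor$ and expanding via the binomial theorem yields the crucial identity that $w(h_e)^p (r - w(h_e))^q$, viewed as a polynomial in $r$, has $r^0$ coefficient equal to $(-1)^q L_e^{p+q}$, independent of the integer $\lambda_e$. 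Thus the piecewise dependence of $w(h_e)$ on $a$ (through $\lambda_e$) is confined entirely to strictly positive powers of $r$.

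The hard part will be to propagate this local observation through the $k$-summation and the $r^{-h^1(\Gamma)}$ prefactor to the global $r^0$ coefficient. After using Faulhaber's formulas to evaluate $\sum_{k \in \{0,\ldots,r-1\}^{h^1(\Gamma)}}$ on the polynomial part, one must carefully track how the floor-correction terms on each tree edge---which split the summation according to the sub-ranges of $k$ where the linear forms $L_e$ cross multiples of $r$---combine with the various sub-leading Bernoulli contributions from the $k$-sums and the $r^{-h^1(\Gamma)}$ prefactor, and verify that their net contribution to the $r^0$ coefficient cancels. Granted this bookkeeping, the surviving contribution is obtained by formally replacing each $w(h_e)$ by $L_e$ throughout and performing a routine polynomial summation, and is manifestly a polynomial in $a_2, \ldots, a_n$.
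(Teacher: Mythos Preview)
The paper does not prove Theorem~\ref{thm:poly}; it is stated as a result of Pixton and cited to \cite{PixDR2} (in preparation), then used as a black box throughout Section~\ref{subsec:poly} and beyond. So there is no proof in the paper to compare your attempt against.

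On the merits of your sketch: the general set-up is the right one, and your local observation on a tree edge---that the $r^0$-coefficient of $w(h_e)^p(r-w(h_e))^q$ equals $(-1)^q L_e^{p+q}$ regardless of the floor $\lambda_e$---is correct and is indeed one of the ingredients. But what you call ``the hard part'' is essentially the entire content of the theorem, and you have not carried it out. The difficulty is exactly the one you name: after dividing by $r^{h^1(\Gamma)}$ and extracting the constant term in $r$, the \emph{positive} powers of $r$ in the individual edge factors---which \emph{do} depend on the floors $\lambda_e(a,k,r)$ and hence piecewise on $a$---feed into the global $r^0$-coefficient via the Faulhaber summation over $k$. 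There is no formal reason these contributions should cancel; the cancellation is not term-by-term or edge-by-edge, and establishing it requires a genuine combinatorial argument (in Pixton's treatment, an induction on the graph together with specific polynomial-interpolation identities) that your outline does not supply. The sentence ``granted this bookkeeping, the surviving contribution \ldots\ is manifestly a polynomial'' is therefore assuming the conclusion.
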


In particular, the coefficient of any monomial $a_2^{b_2} \cdots a_n^{b_n}$ in the class $\D_{g,n}^d(a_2, \ldots, a_n)$ yields a tautological relation, for each $d > g$:
\begin{equation}
\label{eq:rels}
q\left( \bigg[\D_{g,n}^d(a_2, \ldots, a_n)\bigg]_{a_2^{b_2} \cdots a_n^{b_n}} \right) = 0.
\end{equation}
We will use this fact repeatedly in what follows.

\begin{remark}
  In fact, we only require the weaker fact that the restriction of $\D_{g,n}(a_2, \ldots, a_n)$ to the locus of curves of compact type (that is, curves whose associated stable graph is a tree) depends polynomially on $a_2, \ldots, a_n$.  This weaker version of polynomiality is essentially clear from the definition of $\D_{g,n}(a_2, \ldots, a_n)$, given that a tree $\Gamma$ admits a unique weighting mod $r$.

  To see that it is sufficient for the purposes of producing tautological relations, first, the compact-type restriction will be sufficient for studying the leading terms of the relations, while the remaining terms are limited to boundary contributions.
  Second, note that taking the coefficient of $a_2^{b_2} \cdots a_n^{b_n}$ in \eqref{eq:rels} can alternatively be expressed as taking a particular linear combination (related to difference operators) of relations $\D^d_{g,n}(a_2, \ldots, a_n)$ for specific $a_i$'s, and such linear combinations make sense even for dual graphs where polynomiality is not assumed.
\end{remark}

By the definition of Pixton's class, none of the relations \eqref{eq:rels} involve $\kappa$-classes.  Furthermore, the powers of the $\psi$-classes that appear are controlled by the monomial in question:

\begin{lemma}
\label{lem:deg}
The degree of Pixton's class in $\psi_i$ is bounded by half the degree in $a_i$.

More explicitly, suppose that the class
\[\bigg[\D_{g,n}^d(a_2, \ldots, a_n)\bigg]_{a_2^{b_2} \cdots a_n^{b_n}} \in \S_{g,n}^d\]
is expanded in the standard basis of the strata algebra, and let $[\Gamma, \gamma]$ be a basis element that appears with nonzero coefficient.  For each leg $l_i$ of $\Gamma$ corresponding to a marked point $i \in \{2, \ldots, n\}$, if $\gamma = (\{x_j\}, y)$ in the notation of \eqref{eq:gamma}, then
\[y(l_i) \leq \frac{b_i}{2}.\]
\begin{proof}
  This is straightforward from the definition of
  $\D_{g,n}^d(a_2, \ldots, a_n)$ and Theorem~\ref{thm:poly}.
\end{proof}
\end{lemma}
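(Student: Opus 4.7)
The plan is to read off the bound directly from the formula \eqref{eq:pixton} defining $\gamma_w$. The key observation is that $\psi_i$ at leg $\ell_i$ (for any fixed $i$) appears in $\gamma_w$ only through the single factor $e^{\frac{1}{2} a_i^2 \psi_i}$. This is because legs are fixed points of the involution $\iota$, so they do not belong to any edge: the edge factors $\frac{1 - e^{-\frac{1}{2} w(h)w(h')(\psi_h + \psi_{h'})}}{\psi_h + \psi_{h'}}$ only involve $\psi$-classes at half-edges paired under $\iota$, and the remaining leg exponentials $e^{\frac{1}{2} a_j^2 \psi_j}$ for $j \neq i$ only involve $\psi_j$.

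Expanding $e^{\frac{1}{2} a_i^2 \psi_i} = \sum_{k \geq 0} \frac{a_i^{2k}}{2^k k!} \psi_i^k$, every summand contributing to $\widetilde{\D}_{g,n}^r$ therefore factors as $\frac{a_i^{2k}}{2^k k!} \psi_i^k \cdot (\text{rest})$, where the ``rest'' is polynomial in the remaining $a_j$'s, in $w$-values, and in $r$, and does not contain $\psi_i$ at leg $\ell_i$. Consequently, the coefficient in $\widetilde{\D}_{g,n}^r$ of any basic class $[\Gamma, \gamma]$ with $y(\ell_i) = k$ is divisible by $a_i^{2k}$ as an element of $\QQ[a_1, \ldots, a_n][r]$, since the summations over stable graphs and weightings act $\QQ[a_1,\ldots,a_n]$-linearly on these class-valued coefficients.

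Next I would substitute $a_1 = -(a_2 + \cdots + a_n)$, which commutes with multiplication by $a_i^{2k}$ for $i \geq 2$ (since $a_i^{2k}$ does not involve $a_1$), and then take the constant term in $r$ to form $\D_{g,n}$, invoking Theorem~\ref{thm:poly} to ensure the result is genuinely a polynomial in $a_2, \ldots, a_n$. Both operations preserve divisibility by $a_i^{2k}$, so the coefficient of $[\Gamma, \gamma]$ in $\D_{g,n}^d$ is divisible by $a_i^{2k}$ as a polynomial in $a_2, \ldots, a_n$. Hence its coefficient on any monomial $a_2^{b_2} \cdots a_n^{b_n}$ with $b_i < 2k$ must vanish, giving the claimed bound.

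There is no real technical obstacle; the only thing to keep careful track of is the bookkeeping of which $\psi$-classes live on which half-edges, and the fact that no cancellations between different stable graphs, weightings, or $r$-coefficients can artificially lower the $a_i$-degree accompanying $\psi_i^k$ at leg $\ell_i$ — this is automatic because $a_i^{2k}$ already factors out termwise inside $\widetilde{\D}_{g,n}^r$, prior to any summation or specialization.
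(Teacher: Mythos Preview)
Your argument is correct and is exactly the unpacking of the paper's one-line proof (``straightforward from the definition and Theorem~\ref{thm:poly}''): the factor $e^{\frac12 a_i^2\psi_i}$ is the sole source of $\psi$ at leg $\ell_i$, it pulls out of the whole sum, and Theorem~\ref{thm:poly} guarantees that what remains after forming $\D_{g,n}$ is still a polynomial in $a_2,\ldots,a_n$, so the $a_i^{2k}$ divisibility persists. The only cosmetic slip is calling the coefficient in $\widetilde{\D}_{g,n}^r$ an element of $\QQ[a_1,\ldots,a_n][r]$ before any polynomiality is known --- it is really just a function of integer tuples at that stage --- but your actual logic (factor out $a_i^{2k}$ termwise, then invoke Theorem~\ref{thm:poly} at the end) does not rely on that and is sound.
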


\section{Main results}
\label{sec:results}

Given that Pixton's relations \eqref{eq:rels} do not involve any $\kappa$-classes, they are a natural candidate for producing topological recursion relations.  Moreover, by multiplying the relations by appropriate $\psi$-classes, one can ensure that $\kappa$-classes do not arise even after pushforward under forgetful maps, thereby yielding a large family of TRRs.  The proof of Theorem~\ref{thm:main}, which we detail in this section, is an application of this idea.

\subsection{A family of topological recursion relations}

Fix a genus $g$ and a number of marked points $n$.  Let $M$ be a monomial of degree $D\leq 2g+1$ in the variables $a_2, \ldots, a_n$, and let
\[N:= n+2g+2-D.\]
Define
\[\Omega_{g,M}^{\text{pre}} \in \S^{g+1}_{g,N}\]
to be the coefficient of the monomial $M \cdot a_{n+1} \cdots a_N$ in Pixton's class $\D^{g+1}_{g,N}(a_2, \ldots, a_N)$.  Note that we use here the polynomiality discussed in Section~\ref{subsec:poly}.

Let
\[\Pi: \oM_{g,N} \rightarrow \oM_{g,n+1}\]
be the forgetful map, and define
\[\Omega_{g,M}:= \Pi_*(\Omega_{g,M}^{\text{pre}} \cdot \psi_{n+2} \cdots \psi_{N}) \in \S_{g,n+1}^{g+1},\]
where we use $\Pi_*$ to denote the induced map on strata algebras.

\begin{lemma}
\label{lem:TRRs}
For any choice of monomial $M$, the class $\Omega_{g,M}$ is a TRR.  Furthermore, if
\[\pi: \oM_{g,n+1} \rightarrow \oM_{g,n}\]
is the forgetful map and $\pi_*$ denotes the induced map on strata algebras, then $\pi_*(\Omega_{g,M})$ is still a TRR.
\begin{proof}
By the definition of $\D_{g,N}^{g+1}$, there are no $\kappa$-classes in $\Omega_{g,M}^{\text{pre}}$, and furthermore, by Lemma~\ref{lem:deg}, $\Omega_{g,M}^{\text{pre}}$ has degree zero in $\psi_{n+1}, \ldots, \psi_{N}$.  Multiplying $\Omega_{g,M}^{\text{pre}}$ by $\psi_{n+2} \cdots \psi_{N}$ kills the contribution from any dual graph containing a rational tail that is no longer stable after the pushforward $\Pi_*$.  Thus, the only dual graphs occurring in $\Omega_{g,M}^{\text{pre}}\cdot \psi_{n+2} \cdots \psi_{N}$ are dual graphs from $\oM_{g,n+1}$ (with additional legs inserted at the vertices), and each such dual graph comes with degree one in $\psi_{n+2}, \ldots, \psi_N$.  It follows from the dilaton equation that $\Omega_{g,M}$ has no $\kappa$-classes.

The above discussion also implies that $\Omega_{g,M}$ has degree zero in $\psi_{n+1}$.  Thus, by the string equation, pushing it forward under $\pi_*$ creates no $\kappa$-classes, and this proves the second claim.
\end{proof}
\end{lemma}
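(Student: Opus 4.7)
The plan is to establish two facts about $\Omega_{g,M}$: that $q(\Omega_{g,M}) = 0$, and that $\Omega_{g,M}$ contains no $\kappa$-classes when written in the strata basis.  Together these let me decompose $\Omega_{g,M} = \xi + \omega$, where $\xi$ is the coefficient of the trivial (one-vertex) graph and $\omega$ collects the contributions from graphs with at least one edge.  Since $\omega$ is then automatically in $\d^0 \S_{g,n+1}$, Definition~\ref{def:TRR} identifies $\Omega_{g,M}$ as a TRR for $\xi$.

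The tautological-relation part is quick.  As $d = g+1 > g$, Theorem~\ref{thm:DR} gives $q(\D^{g+1}_{g,N}(a_2, \ldots, a_N)) = 0$, and by the polynomiality statement of Theorem~\ref{thm:poly} the same vanishing descends to every monomial coefficient, including $\Omega_{g,M}^{\text{pre}}$.  Multiplication by the tautological class $\psi_{n+2} \cdots \psi_N$ and pushforward along $\Pi_*$ both preserve $\ker q$, so $q(\Omega_{g,M}) = 0$.

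The heart of the argument is the $\kappa$-freeness.  By construction (see \eqref{eq:pixton}) $\Omega_{g,M}^{\text{pre}}$ already carries no $\kappa$-classes, and applying Lemma~\ref{lem:deg} to the monomial $M \cdot a_{n+1} \cdots a_N$ --- in which each of $a_{n+1}, \ldots, a_N$ appears to the first power --- forces the $\psi$-degree of $\Omega_{g,M}^{\text{pre}}$ at each leg $l_{n+1}, \ldots, l_N$ to be zero.  So multiplying by $\psi_{n+2} \cdots \psi_N$ produces a class with exactly $\psi^1$ at each $l_j$ for $j \geq n+2$, nothing at $l_{n+1}$, and still no $\kappa$-classes (no new edges are created, so \eqref{eq:strataproduct} contributes no excess factors).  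The critical step is then the stratum-by-stratum analysis of $\Pi_*$: on any stratum $[\Gamma, \gamma]$ where some $l_j$ (with $j \geq n+2$) sits on a vertex $v$ with $g(v) = 0$ and $n(v) = 3$ --- precisely the case where forgetting $l_j$ would destabilise $v$ --- the factor $\psi_j$ restricts to zero, since $\oM_{0,3}$ is a point.  On the surviving strata every forgotten leg lies on a still-stable vertex, and the pushforward reduces to iterated applications of the dilaton equation $\pi_*(\psi_{n+1}) = 2g-2+n$, producing only numerical factors and never a $\kappa$.  Hence $\Omega_{g,M}$ is $\kappa$-free.

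The same analysis shows that $\Omega_{g,M}$ carries no $\psi_{n+1}$ either, so for the second claim I can push forward under $\pi_*$ using only the string equation, which again returns $\psi$-monomials and no $\kappa$'s, while $q \circ \pi_* = \pi_* \circ q$ preserves the tautological relation.  The delicate step of the whole plan is the vanishing argument at $\oM_{0,3}$ vertices: one has to be certain that this kills every configuration for which a direct dilaton computation would otherwise manufacture a $\kappa$, so that what remains falls entirely within the safe regime where dilaton (and later string) merely rescale.
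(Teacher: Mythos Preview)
Your proposal is correct and follows essentially the same route as the paper's proof: both hinge on the observation that $\Omega_{g,M}^{\text{pre}}$ is $\kappa$-free with no $\psi$'s at legs $n+1,\ldots,N$ (via Lemma~\ref{lem:deg}), that multiplication by $\psi_{n+2}\cdots\psi_N$ annihilates any stratum whose rational component would destabilise under $\Pi$, and that on the surviving strata the pushforward is governed by the dilaton equation (and then the string equation for $\pi_*$), which introduces no $\kappa$'s. Your added explicit verification that $q(\Omega_{g,M})=0$ is a welcome detail the paper leaves implicit, and your iterative (one leg at a time) phrasing of the destabilisation argument is equivalent to the paper's all-at-once statement that the only surviving dual graphs are those of $\oM_{g,n+1}$ with extra legs inserted.
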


The $n=1$ case of Theorem~\ref{thm:main} is an immediate consequence of Lemma~\ref{lem:TRRs}:

\begin{theorem}
\label{thm:n=1}
There exists a TRR for $\psi_1^g \in \S_{g,1}^g$.
\begin{proof}
Take $n=1$ and $M = 1$ in the above.  As observed in the proof of Lemma~\ref{lem:TRRs}, the only dual graphs contributing to $\Omega_{g,1}^{\text{pre}}$ are dual graphs from $\oM_{g,2}$ with additional legs at the vertices.  Of these, all contribute to the boundary part of $\pi_*(\Omega_{g,1})$ except for the trivial dual graph $\Gamma_0$ and the dual graph of the boundary divisor $\delta_{0, \{1,2\}}$ parameterizing curves on which both marked points lie on a rational tail.  It is straightforward to compute that the contribution from the latter graph is zero, whereas the contribution from $\Gamma_0$ is a nonzero number $\gamma$.  Thus, dividing $\pi_*(\Omega_{g,1})$ by $\gamma$ gives the desired TRR.
\end{proof}
\end{theorem}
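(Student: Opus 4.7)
The plan is to invoke Lemma~\ref{lem:TRRs} with $n=1$ and $M=1$, so that $N=2g+3$ and $\pi_*(\Omega_{g,1})\in \S_{g,1}^g$ is a TRR with no $\kappa$-classes. Since $\psi_1^g$ is the unique $\kappa$-free class of degree $g$ supported on the trivial graph of $\oM_{g,1}$, we may write
\[\pi_*(\Omega_{g,1})=\gamma\,\psi_1^g+\omega,\qquad \omega\in\d^0\S_{g,1},\]
for some $\gamma\in\QQ$; if $\gamma\neq 0$, then $\gamma^{-1}\pi_*(\Omega_{g,1})$ is the desired TRR for $\psi_1^g$, so the entire task reduces to computing $\gamma$.

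The scalar $\gamma$ collects contributions from those dual graphs of $\oM_{g,2}$ appearing in $\Omega_{g,1}$ whose image under $\pi$ is the trivial graph of $\oM_{g,1}$: namely $\Gamma_0$ itself, and the divisor $\delta_{0,\{1,2\}}$, whose rational bubble becomes unstable when the second marked point is forgotten. I would first show that the $\delta_{0,\{1,2\}}$ contribution vanishes. By the argument in Lemma~\ref{lem:TRRs}, the dual graphs on $\oM_{g,2g+3}$ descending to this divisor reduce (after multiplication by $\psi_3\cdots\psi_{2g+3}$) to a single rational vertex $v_0$ carrying legs $\{1,2\}\cup T$ for some $T\subseteq\{3,\ldots,2g+3\}$, connected by one edge to a genus-$g$ vertex. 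On $v_0\cong\oM_{0,3+|T|}$, the product $\prod_{i\in T}\psi_i$ is already a top-degree class, so every other factor on $v_0$ (the exponentials $e^{a_i^2\psi_i/2}$ for $i\in\{1,2\}\cup T$ and the $v_0$-side of the edge factor) must contribute only its $\psi^0$-part; this kills the $a_1$- and $a_2$-dependence from $v_0$. The remaining $a$-dependence comes from the Pixton weighting on the edge, which satisfies $w(h)\equiv \sum_{i\notin T\cup\{1,2\}}a_i\pmod r$ after the substitution $a_1=-(a_2+\cdots+a_{2g+3})$, together with the even-power factors $e^{a_i^2\psi_i/2}$ on $v_g$; neither of these contains $a_2$, so the coefficient of $a_2\cdots a_{2g+3}$ in the total decoration vanishes.

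For the $\Gamma_0$ contribution, Lemma~\ref{lem:TRRs} again implies that only the trivial graph on $\oM_{g,2g+3}$ itself contributes: any other candidate would require a rational tail with only forgotten markings, on which $\psi_{i_1}\cdots\psi_{i_k}$ has degree $k$ exceeding the dimension $k-2$ of $\oM_{0,k+1}$. On this trivial graph, $\Omega_{g,1}^{\text{pre}}$ equals the coefficient of $a_2\cdots a_{2g+3}$ in the degree-$(g+1)$ part of $\prod_{i=1}^{2g+3}e^{a_i^2\psi_i/2}$; the factors with $i\geq 2$ yield only even powers of $a_i$, so this coefficient comes entirely from expanding $a_1=-(a_2+\cdots+a_{2g+3})$ in $e^{a_1^2\psi_1/2}$, giving $\tfrac{(2g+2)!}{2^{g+1}(g+1)!}\,\psi_1^{g+1}$. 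Pushing this forward by $\Pi_*$ (iterated dilaton contributes a factor $\tfrac{(4g)!}{(2g-1)!}$) and then by $\pi_*$ (via $\pi_*\psi_1^{g+1}=\psi_1^g$ modulo boundary, which follows from $\pi^*\psi_1=\psi_1-\delta_{0,\{1,2\}}$ and the self-intersection $\delta_{0,\{1,2\}}^k=(-\psi)^{k-1}\delta_{0,\{1,2\}}$) produces a nonzero rational multiple of $\psi_1^g$, so $\gamma\neq 0$.

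The main obstacle is the vanishing of the $\delta_{0,\{1,2\}}$ contribution; the trick is recognizing that the factor $\prod_{i\in T}\psi_i$ on $v_0$ is already top-degree, which forces all other $v_0$-side factors to collapse to their constant terms and strips away any $a_1$- or $a_2$-dependence from that side. The remaining pieces of the argument are routine pushforward calculations via the string and dilaton equations.
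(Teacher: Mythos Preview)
Your proposal is correct and follows the same route as the paper's proof: take $n=1$, $M=1$, identify $\Gamma_0$ and $\delta_{0,\{1,2\}}$ as the only graphs contributing to the non-boundary part of $\pi_*(\Omega_{g,1})$, show the latter contributes zero and the former a nonzero multiple of $\psi_1^g$. Where the paper simply says the $\delta_{0,\{1,2\}}$ vanishing and the $\Gamma_0$ nonvanishing are ``straightforward to compute,'' you have supplied the details; your dimension argument on the rational bubble (forcing the $v_0$-side factors to their $\psi^0$-parts and thereby eliminating all $a_2$-dependence) is exactly the mechanism behind the vanishing, and your explicit constant $\tfrac{(2g+2)!}{2^{g+1}(g+1)!}\cdot\tfrac{(4g)!}{(2g-1)!}$ matches the $n=1$ specialization of Lemma~\ref{lem:Gamma0}.
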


The proof of Theorem~\ref{thm:main} for $n >1$ follows the same template, except that the terms in $\pi_*(\Omega_{g,M})$ with a trivial dual graph come from terms in $\Omega_{g,M}$ of two different types, both of which contribute nontrivially: those with a trivial dual graph, and those with  the dual graph of one of the boundary divisors $\delta_{0, \{i,n+1\}}$ for $i \in \{2, \ldots, n\}$.  The next section computes these contributions explicitly.

\subsection{Graph contributions}

Throughout this subsection, we assume that $n \geq 2$.
Let $\Gamma_0$ denote the trivial dual graph in $\oM_{g,n+1}$, and for
each $i \in \{1, \ldots, n\}$, let $\Gamma_i$ denote the dual graph in
$\oM_{g,n+1}$ on which marked points $i$ and $n+1$ lie on a rational
tail, which corresponds to the boundary divisor
$\delta_{0, \{i,n+1\}}$.  Our calculation of the contributions of these dual graphs to $\Omega_{g,N}$ repeatedly uses that, for a single-variable polynomial $p(x)$ and variables $x_1, \ldots, x_m$, the following combinatorial identity holds:
\begin{equation}
  \label{eq:reduce-variables}
  [p(x_1 + \dotsb + x_m)]_{x_1 \cdot \dotsb \cdot x_m}
    = m! [p(x)]_{x^m}.
\end{equation}

We begin by calculating the contribution of the trivial graph.

\begin{lemma}
  \label{lem:Gamma0}
  The contribution of $\Gamma_0$ to $\Omega_{g,M}$ is
  \begin{equation*}
    \frac{(2g-2+N-1)! (N - n)!}{(2g-2+n)!} [P_0(a_2, \dotsc, a_n, x)]_{M \cdot x^{N - n}},
  \end{equation*}
  where $[P_0(a_2, \dotsc, a_n, x)]_{M \cdot x^{N - n}}$ denotes the
  coefficient of $M \cdot x^{N - n}$ in the power series
  \begin{equation*}
    P_0(a_2, \dotsc, a_n, x) = \exp\left(\frac{1}{2}\left(\sum_{i=2}^n a_i + x\right)^2 \psi_1 + \frac{1}{2}\sum_{i=2}^n a_i^2 \psi_i\right).
  \end{equation*}
\begin{proof}
  The contribution of $\Gamma_0$ to $\Omega_{g,M}^{\text{pre}}$ is
  given by equation \eqref{eq:pixton}.
  Namely, if we set
  \[M':= M \cdot a_{n+1} \cdots a_{N},\]
  then the contribution of $\Gamma_0$ to $\Omega_{g,M}^{\text{pre}}$ is
  \[\left[\exp\left(\frac{1}{2} \sum_{i=1}^N a_i^2 \psi_i\right)\right]_{M'}
      = \left[ \exp\left(\frac{1}{2}\left(\sum_{i=2}^N a_i\right)^2 \psi_1 + \frac{1}{2}\sum_{i=2}^n a_i^2 \psi_i\right)\right]_{M'},\]
  where we use that $a_1 = -(a_2 + \cdots + a_N)$, and the second sum
  is only up to $n$ since $M'$ is linear in the variables
  $a_{n+1} \ldots, a_N$.
  Applying the identity \eqref{eq:reduce-variables} to the
  variables $a_{n+1}, \dotsc, a_N$, we rewrite the contribution of
  $\Gamma_0$ to $\Omega_{g,M}^{\text{pre}}$ as
  \begin{equation}
  \label{eq:Gamma0pre}
    (N - n)! \left[ \exp\left(\frac{1}{2}\left(\sum_{i=2}^n a_i + x\right)^2 \psi_1 + \frac{1}{2}\sum_{i=2}^n a_i^2 \psi_i\right)\right]_{M \cdot x^{N - n}}
  \end{equation}
  Multiplying \eqref{eq:Gamma0pre} by $\psi_{n+2}\cdots \psi_N$ and pushing forward, repeated application of the dilaton equation gives a coefficient of
  \[\frac{(2g-2+N-1)!}{(2g-2+n)!},\]
  which proves the claim.
\end{proof}
\end{lemma}

Next, we compute the contribution of $\Gamma_i$ to $\Omega_{g,M}$ for each $1 \leq i \leq n$.  We denote by $\psi'$ the $\psi$-class on the half-edge of $\Gamma_i$ adjacent to the genus-$g$ vertex.

\begin{lemma}
  \label{lem:Gammai}
  The contribution of $\Gamma_1$ to $\Omega_{g,M}$ is zero.  For $2\leq i\leq n$,  the contribution of $\Gamma_i$ to $\Omega_{g,M}$ is
  \begin{multline*}
    -\frac{(2g + 1 - D)!}{(2g - 3 + n)!} \sum_{n_1 + n_2 = 2g+1-D} (2g-3+n+n_1)! (n_2 + 1)! \\
  \cdot [P_i(a_2, \dotsc, a_n, x_1, x_2)]_{M \cdot x_1^{n_1} x_2^{n_2 + 1}},
  \end{multline*}
  where
  \begin{multline*}
    P_i(a_2, \dotsc, a_n, x_1, x_2)
    = \sum_{\ell \geq 0} \frac{(a_i + x_2)^{2\ell+2}}{2^{\ell+1} (\ell+1)!} (\psi')^{\ell} \\
    \cdot  \exp\left(\frac{1}{2} \left( \sum_{j=2}^n a_j + x_1 + x_2\right)^2 \psi_1 + \frac{1}{2}\sum_{\substack{j \in \{2, \ldots, n\}\\ j \neq i}} a_j^2 \psi_j\right).
  \end{multline*}
\begin{proof}
For each choice of partition $\{n+2, \ldots, N\} = I_1 \sqcup I_2$, there is a graph $\widetilde{\Gamma}_i^{I_1, I_2}$ on $\oM_{g,N}$ whose image under the forgetful map $\Pi$ is $\Gamma_i$; namely, $I_1$ gives the additional legs on the genus-$g$ vertex of $\Gamma_i$ and $I_2$ gives the additional legs on the genus-$0$ vertex.  By the same reasoning as in the proof of Theorem~\ref{thm:n=1}, these are the only dual graphs mapping to $\Gamma_i$ under $\Pi$ that have nonzero contribution to $\Omega_{g,M}$.  For each such dual graph, let $\psi'$ denote the $\psi$-class on the genus-$g$ vertex of and let $\psi''$ denote the $\psi$-class on the genus-$0$ vertex.

Denote by $\text{Contr}(\widetilde{\Gamma}_i^{I_1,I_2})$ the contribution of $\widetilde{\Gamma}_i^{I_1, I_2}$ to $\Omega_{g,M}^{\text{pre}}$.  When we multiply $\Omega_{g,M}^{\text{pre}}$ by
$\psi_{n+2} \cdots \psi_N$, any appearance of $\psi''$, $\psi_i$, or
$\psi_{n+1}$ in $\text{Contr}(\widetilde{\Gamma}_i^{I_1,I_2})$ is
killed for dimension reasons.
Thus, it suffices to compute
$\text{Contr}(\widetilde{\Gamma}_i^{I_1,I_2})\bigg|_{\psi'',\psi_i,\psi_{n+1}=0}$.  If $i > 1$, then from \eqref{eq:pixton} we see that this contribution equals
\begin{multline*}
  -\left[ \exp\left(\frac{1}{2} \left( \sum_{j=2}^N a_j\right)^2 \psi_1 + \frac{1}{2}\sum_{\substack{j \in \{2, \ldots, n\}\\ j \neq i}} a_j^2 \psi_j\right)\right.\\
  \cdot \left.\sum_{\ell \geq 0} \frac{(a_i + a_{n+1} + a_{I_2})^{2\ell+2}}{2^{\ell+1} (\ell+1)!} (\psi')^{\ell} \right]_{M'},
\end{multline*}
in which we denote $M' = M \cdot a_{n+1} \cdots a_N$ (as above), and
\begin{equation*}
  a_I := \sum_{j \in I} a_j.
\end{equation*}
Applying the identity \eqref{eq:reduce-variables} twice, we can rewrite the
above as
\begin{equation*}
  - (\#I_1)! (\#I_2 + 1)! [P_i(a_2, \dotsc, a_n, x_1, x_2)]_{M \cdot x_1^{\#I_1} x_2^{\#I_2 + 1}}
\end{equation*}
In particular, this depends only on $n_1 := \#I_1$ and
$n_2 := \#I_2$.
It follows that, when $i >1$, the contribution of $\Gamma_i$ to $\Omega_{g,M}$
equals
\begin{multline*}
  -\sum_{n_1 + n_2 = 2g+1-D} \binom{2g+1-D}{n_1}\cdot \frac{(2g-3+n+n_1)!}{(2g-3+n)!} \\
  \cdot \frac{n_2!}{0!} \cdot n_1! (n_2 + 1)! [P_i(a_2, \dotsc, a_n, x_1, x_2)]_{M \cdot x_1^{n_1} x_2^{n_2 + 1}},
\end{multline*}
in which the two quotients of factorials come from the dilaton
equation on the genus-$g$ and genus-$0$ vertices, respectively.

This proves the lemma in the case when $i>1$.  When $i=1$, on the other hand, \eqref{eq:pixton} shows that $\text{Contr}(\widetilde{\Gamma}_i^{I_1,I_2})\bigg|_{\psi'',\psi_i,\psi_{n+1}=0}$ equals
\[   -\left[ \exp\left( \frac{1}{2}\sum_{j=2}^{n} a_j^2 \psi_j\right)
  \cdot \sum_{\ell \geq 0} \frac{(a_2+...+a_{n}+a_{I_1})^{2\ell+2}}{2^{\ell+1} (\ell+1)!} (\psi')^{\ell} \right]_{M'},\]
 which is manifestly equal to zero since the variable $a_{n+1}$ does not appear.
\end{proof}
\end{lemma}

\subsection{Proof of Theorem~\ref{thm:main}}
\label{subsec:proof}

The sum of the results of Lemma~\ref{lem:Gamma0} and Lemma~\ref{lem:Gammai}, after pushing forward under $\pi: \oM_{g,n+1} \rightarrow \oM_{g,n}$, gives precisely the contribution of the trivial graph to a TRR in $A^g(\oM_{g,n})$.  There is one such TRR for each choice of the monomial $M$, and our goal is to use these to deduce TRRs for any monomial
\begin{equation}
\label{eq:psimon}
\psi_1^k \prod_{j=2}^n \psi_i^{l_j}
\end{equation}
of degree $g$ in the $\psi$-classes on $\oM_{g,n}$, by induction on
$n$ and $l_2, \dotsc, l_n$.  It is natural, toward this end, to consider the TRR associated to
\[M = \prod_{j=2}^n a_j^{2l_j}.\]
The power of $\psi_j$ for $j \ge 2$ in the contribution of $\Gamma_0$
to $\Omega_{g,M}$ (calculated in Lemma~\ref{lem:Gamma0}) is bounded by
$l_i$, so after applying the string equation, the same holds for the
pushforward of this contribution under $\pi$.

The problem, however, is that there is no constraint on the power of
$\psi'$ in the contribution of $\Gamma_i$ calculated in
Lemma~\ref{lem:Gammai}, so the power of $\psi_i$ in the push-forward
of that contribution under $\pi$ can be $l_i$ or above.
Below, we make the surprising observation that, for $l_i > 0$, these
problematic terms cancel each other in the combination
\begin{multline}
  \label{eq:cancel-Gamma_i}
  \frac{2l_i}{(2g + 1 - \sum_{j=2}^{n} 2l_j)!} \pi_* \Omega_{g, a_2^{2l_2} \cdot \dotsb \cdot a_n^{2l_n}} \\
  - \frac 1{(2g + 2 - \sum_{j=2}^{n} 2l_j)!} \pi_* \Omega_{g, a_2^{2l_2} \cdot \dotsb \cdot a_{i - 1}^{2l_{i - 1}} a_i^{2l_i - 1} a_{i + 1}^{2l_{i + 1}} \cdot \dotsb \cdot a_n^{2l_n}}
\end{multline}
of TRRs.
This cancellation is key to setting up an induction on the $l_j$.

\begin{lemma}
  \label{lem:cancel-Gamma_i}
  In the contribution of $\Gamma_i$ to the combination
  \eqref{eq:cancel-Gamma_i}, the power of $\psi_j$ for $j \ge 2$ is
  bounded by $l_j$ if $j \neq i$, and by $l_i - 1$ if $j = i$.
\end{lemma}
\begin{proof}
  The statement for $j \neq i$ follows directly from
  Lemma~\ref{lem:Gammai}, and therefore, we will assume $j = i$ from
  now on.
  Furthermore, we may assume $i = 2$ without loss of generality.
  By Lemma~\ref{lem:Gammai}, the contribution of $\Gamma_2$ to
  \begin{equation*}
    \frac{2l_2}{(2g + 1 - \sum_j 2l_j)!}  \cdot \Omega_{g, a_2^{2l_2} \cdot \dotsb \cdot a_n^{2l_n}}
  \end{equation*}
  is
  \begin{multline*}
    -\frac{2l_2}{(2g - 3 + n)!} \sum_{n_1 + n_2 = 2g+1-\sum_j 2l_j} (2g-3+n+n_1)! (n_2 + 1)! \\
    \cdot [P_i(a_2, \dotsc, a_n, x_1, x_2)]_{a_2^{2l_2} \cdot \dotsb \cdot a_n^{2l_n} \cdot x_1^{n_1} x_2^{n_2 + 1}}.
  \end{multline*}
  Noting that for any polynomial $f(z)$, we have
  \begin{equation*}
    [f(a_2 + x_2)]_{a_2^{2l_2} x_2^{n_2 + 1}}
    = \frac{n_2 + 2}{2l_2} [f(a_2 + x_2)]_{a_2^{2l_2 - 1} x_2^{n_2 + 2}},
  \end{equation*}
  we may rewrite the contribution as
  \begin{multline*}
    -\frac 1{(2g - 3 + n)!} \sum_{n_1 + n_2 = 2g+1-\sum_j 2l_j} (2g-3+n+n_1)! (n_2 + 2)! \\
    \cdot [P_i(a_2, \dotsc, a_n, x_1, x_2)]_{a_2^{2l_2 - 1} \cdot \dotsb \cdot a_n^{2l_n} \cdot x_1^{n_1} x_2^{n_2 + 2}}.
  \end{multline*}
  On the other hand, the contribution of $\Gamma_2$ to
  \begin{equation*}
    \frac{1}{(2g+2-\sum_{j=2}^n 2 l_j)!}\Omega_{g, a_2^{2l_2 - 1}a_3^{2l_3} \cdot \dotsb \cdot a_n^{2l_n}}
  \end{equation*}
  is
  \begin{multline*}
    -\frac 1{(2g - 3 + n)!} \sum_{n_1 + n_2 = 2g+2-\sum_j 2l_j} (2g-3+n+n_1)! (n_2 + 1)! \\
    \cdot [P_i(a_2, \dotsc, a_n, x_1, x_2)]_{a_2^{2l_2 - 1} \cdot \dotsb \cdot a_n^{2l_n} \cdot x_1^{n_1} x_2^{n_2 + 1}}.
  \end{multline*}
  By reindexing $n_2$, we see that these agree up to the boundary term
  \begin{multline*}
    -\frac 1{(2g - 3 + n)!} (2g-3+n+2g+2-\textstyle\sum_j 2l_j)! \\
    \cdot [P_i(a_2, \dotsc, a_n, x_1, x_2)]_{a_2^{2l_2 - 1} \cdot \dotsb \cdot a_n^{2l_n} \cdot x_1^{2g+2-\sum_j 2l_j} x_2}.
  \end{multline*}
  Inspecting the definition of $P_i$, the power of $\psi'$ in this
  term is bounded by $l_2 - 1$, and so also the power of $\psi_2$ in
  \eqref{eq:cancel-Gamma_i} is bounded by $l_2 - 1$.
\end{proof}

To achieve the same cancellation for all $i$, we consider the linear
combination
\begin{equation}
  \label{eq:cancellation}
  \sum_{d_2, \dotsc, d_n = 0}^1
  \frac{\prod_{j = 2}^n (-2l_j)^{1 - d_j}}{(2g + 1 - \sum_{j=2}^{n} (2l_j - d_j))!} \cdot \pi_* \Omega_{g, a_2^{2l_2 - d_2} \cdot \dotsb \cdot a_n^{2l_n - d_n}},
\end{equation}
defined when $l_j \ge 1$ for all $j$.  This is a TRR (since each $\pi_*\Omega_{g,M}$ is), and in the next proposition, we prove that it expresses \eqref{eq:psimon} in terms of ``lower" $\psi$-monomials modulo boundary, where a $\psi$-monomial $P$ is said to be \emph{lower} than \eqref{eq:psimon} if $\deg_{\psi_j}(P) \le l_j$ for $j \ge 2$ with at
least one of these inequalities being strict.

\begin{proposition}
  \label{prop:induction-step}
  Assume that $l_j \ge 1$ for all $j$.
  Then the TRR \eqref{eq:cancellation} expresses the monomial
  \eqref{eq:psimon} in terms of lower $\psi$-monomials and boundary
  terms.
\end{proposition}
\begin{proof}
  As remarked at the beginning of Section~\ref{subsec:proof},
  Lemma~\ref{lem:Gamma0} shows that the contribution of $\Gamma_0$ to
  the main summand $d_2 = \dotsb = d_n = 0$ is a multiple of
  \eqref{eq:psimon} up to lower $\psi$-monomials.
  Furthermore, the coefficient of \eqref{eq:psimon} is clearly
  positive, and hence nonzero.
  Similarly, the contribution of $\Gamma_0$ to the other summands in
  \eqref{eq:cancellation} consists only of lower $\psi$-monomials.

  Now, consider the contribution of one of the graphs $\Gamma_i$.
  Without loss of generality, we can assume that $i = 2$.
  In that case, Lemma~\ref{lem:cancel-Gamma_i} proves that the sum of
  the main term $d_2 = \dotsc = d_n = 0$ and the term $d_2 = 1$,
  $d_3 = \dotsb = d_n = 0$ is a combination of lower $\psi$-monomials.
  By the same proof, all other pairs of terms with fixed
  $d_3, \dotsc, d_n$ give a combination of lower $\psi$-monomials.
  Since the remaining dual graphs give only boundary contributions,
  this concludes the proof.
\end{proof}

This proposition sets up an induction that completes the proof of our main theorem: namely, that for any degree-$g$ $\psi$-monomial on $\oM_{g,n}$, there is a TRR expressing it in terms of boundary.

\begin{proof}[Proof of Theorem~\ref{thm:main}]
First, we use induction on $n$.  The base case is when $n=1$, in which case the result holds by Theorem~\ref{thm:n=1}.  Suppose, then, that the result holds on $\oM_{g,n-1}$, and consider a degree-$g$ $\psi$-monomial on $\oM_{g,n}$.  If some $\psi_j$ does not appear in this monomial, then modulo boundary, it is pulled back from $\oM_{g,n-1}$ and thus a TRR exists by induction.  Thus, it suffices to consider $\psi$-monomials of the form \eqref{eq:psimon} in which $l_j \geq 1$ for each $j$.  For these monomials, induction on the $l_j$ together with Proposition~\ref{prop:induction-step} completes the proof.
\end{proof}

\end{document}